\newtheorem{thm}{Theorem}
\newtheorem{lem}[thm]{Lemma}
\begin{document}

\title{Complexity of tree-coloring interval graphs equitably\thanks{Supported by the National Natural Science Foundation of China (11871055, 11701440) and the Youth Talent Support Plan of Xi'an Association for Science and Technology (2018-6).}}
%
%
\author{Bei Niu\inst{1} \and
Bi Li\inst{1}\thanks{This author is the corresponding author.}
\and
Xin Zhang\inst{1}\thanks{This author shares a co-first authorship.}
}
\authorrunning{B. Niu, B. Li, and X. Zhang}
%
\institute{School of Mathematics and Statistics, Xidian University, Xi'an~710071,~China\\
\email{beiniu@stu.xidian.edu.cn, $\{$libi,xzhang$\}$@xidian.edu.cn}}
\maketitle              
\begin{abstract}
 An equitable tree-$k$-coloring of a graph is a vertex $k$-coloring such that each color class induces a forest and the size of any two color classes differ by at most one. In this work, we show that every interval graph $G$ has an equitable tree-$k$-coloring for any integer $k\geq \lceil(\Delta(G)+1)/2\rceil$, solving a conjecture of Wu, Zhang and Li (2013) for interval graphs, and furthermore, give a linear-time algorithm for determining whether a proper interval graph admits an equitable tree-$k$-coloring for a given integer $k$. For disjoint union of split graphs, or $K_{1,r}$-free interval graphs with $r\geq 4$, we prove that it is $W[1]$-hard to decide whether there is an equitable tree-$k$-coloring when parameterized by number of colors, or by treewidth, number of colors and maximum degree, respectively.

\keywords{$W[1]$-hardness \and linear-time algorithm \and equitable tree-coloring \and interval graph \and communication network.}
\end{abstract}

\section{Introduction}

A minimization model in graph theory so-called the equitable tree-coloring can be used to formulate a structure decomposition problem on the communication network with some security considerations \cite{zhang2019equitable}. Namely, an \emph{equitable tree-$k$-coloring} of a (finite, simple and undirected) graph $G$ is a mapping $c: V(G) \rightarrow \{1,2,\cdots,k\}$ so that $c^{-1}(i)$ induces a forest for each $1\leq i\leq k$, and $\big||c^{-1}(i)|-|c^{-1}(j)|\big|\leq 1$ for each pair of $1\leq i<j\leq k$. The notion of the equitable tree-$k$-coloring was introduced by Wu, Zhang and Li \cite{Wu20132696}, who conjectured that every graph $G$ has an equitable tree-$k$-coloring for any integer $k\geq \lceil(\Delta(G)+1)/2\rceil$. This conjecture (equitable vertex arboricity conjecture, EVAC for short) is known to have an affirmative answer in some cases including:
\begin{itemize}
    \item $G$ is complete or bipartite \cite{Wu20132696};
    \item $\Delta(G)\geq (|G|-1)/2$ \cite{Zhang2020581,Zhang2014217};
    \item $\Delta(G)\leq 3$ \cite{Zhang20161724};
    \item $G$ is 5-degenerate \cite{Chen2017426};
    \item $G$ is $d$-degenerate with $\Delta(G)\geq 10d$ \cite{zhang2019equitable};
    \item $G$ is IC-planar with $\Delta(G)\geq 14$ or $g(G)\geq 6$ \cite{Niu2020};
    \item  $G$ is a $d$-dimensional grid with $d\in \{2,3,4\}$ \cite{Drgas-Burchardt20186353}.
\end{itemize}
Anyway, EVAC is widely open.

Algorithmically, the following EQUITABLE TREE COLORING is NP-complete \cite{Gomes2019}.
\begin{tcolorbox}
EQUITABLE TREE COLORING\\
\noindent \textbf{Instance}: A graph $G$ and the number of colors $k$.\\
\noindent \textbf{Question}: Is there an equitable tree-$k$-coloring of $G$?
\end{tcolorbox}
\noindent Recently in \cite{Li2020156}, the last two authors proved that EQUITABLE TREE COLORING problem is W[1]-hard when parameterized by treewidth, and that it is polynomial solvable in the class of graphs with bounded treewidth, and in the class of graphs of bounded vertex cover number.

This paper focuses on interval graphs. A graph $G$ is an \emph{interval graph} if there exist an \emph{interval representation} of $G$, i.e., a family $\{T_v|v\in V(G)\}$ of intervals on the real line such that $u$ and $v$ are adjacent vertices in $G$ if and only if $T_u\cap T_v \not= \emptyset$. For any vertex $v\in V(G)$, $L(v)$ and $R(v)$ denote the left point and the right point of its corresponding interval $T_v$, respectively.
For two vertices $u,v\in V(G)$, if $L(u)<L(v)$, or $L(u)$=$L(v)$ and $R(u)\le R(v)$, then we write $u<v$.
For any three vertices $u,v,w\in V(G)$, it is clear that
\begin{align}\label{eq1}
 {\rm if} ~u<v<w ~and ~uw\in E(G), ~{\rm then}~ uv\in E(G).
\end{align}
Olariu \cite{Olariu199121} shows that a graph is an interval graph if and only if it has a \emph{linear order} $<$ on $V(G)$ satisfying (\ref{eq1}); and that the order can be found in linear time. Using this fact, we give the following result as a quick start of this paper, confirming EVAC for interval graphs.

\begin{thm}\label{interval}
Every interval graph $G$ has an equitable tree-$k$-coloring for any integer $k\geq \lceil\frac{\Delta(G)+1}{2}\rceil$, where the lower bound of $k$ is sharp.
\end{thm}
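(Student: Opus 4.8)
The plan is to reduce everything to one extremely simple colouring. Fix a linear order $v_1<v_2<\cdots<v_n$ on $V(G)$ satisfying (\ref{eq1}), which exists and is computable in linear time by \cite{Olariu199121}, and colour $v_i$ with colour $((i-1)\bmod k)+1$, i.e.\ a round-robin colouring along the order. Equitability is then automatic, since every colour class has size $\lceil n/k\rceil$ or $\lfloor n/k\rfloor$, so the whole content is to check that each colour class induces a forest.

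The key step is a window bound on back-neighbours. Call $v_j$ a \emph{back-neighbour} of $v_i$ if $j<i$ and $v_jv_i\in E(G)$; I claim every back-neighbour of $v_i$ lies among $v_{i-\Delta(G)},\dots,v_{i-1}$. Indeed, let $v_p$ be the $<$-smallest back-neighbour of $v_i$. For every $m$ with $p<m<i$ we have $v_p<v_m<v_i$ and $v_pv_i\in E(G)$, so $v_pv_m\in E(G)$ by (\ref{eq1}); hence $v_p$ is adjacent to each of $v_{p+1},\dots,v_{i-1},v_i$, giving $\deg(v_p)\ge i-p$ and therefore $i-p\le\Delta(G)$. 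Since $k\ge\lceil(\Delta(G)+1)/2\rceil$ implies $2k\ge\Delta(G)+1>\Delta(G)$, a block of $\Delta(G)$ consecutive positions contains at most one index $\equiv i\pmod{k}$; hence $v_i$ has at most one back-neighbour in its own colour class. It follows at once that no colour class can contain a cycle, since the $<$-largest vertex of such a cycle would have two back-neighbours inside its class; thus each colour class induces a forest, as required. (Note this argument does not even invoke that interval graphs are chordal.)

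For sharpness I would take $G=K_n$, realized by $n$ copies of the same interval, so $\Delta(G)=n-1$ and $\lceil(\Delta(G)+1)/2\rceil=\lceil n/2\rceil$. In $K_n$ any colour class inducing a forest has at most two vertices, so $k-1<\lceil n/2\rceil$ classes together contain at most $2(k-1)<n$ vertices and cannot colour all of $G$; hence $\lceil n/2\rceil$ colours are necessary, matching the lower bound.

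The only delicate point is the window bound; after it is in place the modular colouring does all the rest, so I do not expect a genuine obstacle here. The one thing to get right is the inequality chain $2k\ge\Delta(G)+1$, which is exactly what prevents two back-neighbours of a vertex from landing in the same residue class modulo $k$.
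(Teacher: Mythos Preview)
Your argument is correct and follows essentially the same route as the paper: the same round-robin colouring along an Olariu order, the same use of property~(\ref{eq1}) to force a vertex of degree at least $2k\ge\Delta(G)+1$ whenever a monochromatic cycle appears, and the same complete-graph example for sharpness. The only cosmetic difference is that the paper derives the contradiction at the \emph{smallest} vertex of a hypothetical monochromatic cycle (finding two forward neighbours at distance $\ge 2k$ in the order), whereas you work at the \emph{largest} vertex via your back-neighbour window bound; these are dual phrasings of the same degree argument.
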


\begin{proof}
Sort the vertices of $G$ into $v_0<v_1<\cdots<v_{n-1}$ so that \eqref{eq1} holds, and for each $0\leq i\leq n-1$, let $c(v_i)=i~$(mod $k$). It is clear that $c$ is an equitable $k$-coloring of $G$. If there is a monochromatic cycle in color $i$, then there are three vertices $v_{i+\alpha k}$, $v_{i+\beta k}$ and $v_{i+\gamma k}$ with $0\leq \alpha<\beta<\gamma$ such that $v_{i+\alpha k} v_{i+\beta k}, v_{i+\alpha k} v_{i+\gamma k}\in E(G)$.
By \eqref{eq1}, $v_{i+\alpha k} v_{j}\in E(G)$ for any $i+\alpha k<j\leq i+\gamma k$, which implies $d_G(v_{i+\alpha k})\geq (\gamma-\alpha)k\geq 2k\geq \Delta(G)+1$, a contradiction. Hence, there is no monochromatic cycle under $c$. This implies that $c$ is an equitable tree-$k$-coloring of $G$. Since the complete graph $K_{2s}$ is an interval graph and it does not admit an equitable tree-$k$-coloring for any $k\leq s-1$, the lower bound of $k$ in this result is sharp.
\end{proof}

On the other hand, if we are given an integer $k<\lceil\frac{\Delta(G)+1}{2}\rceil$, determining whether an interval graph admits an equitable tree-$k$-coloring is not easy. Precisely, the next two theorems will be proved in Section \ref{sec:2}.

\begin{thm}\label{split}
{\rm EQUITABLE TREE COLORING} of the disjoint union of split graphs parameterized by number of colors is $W[1]$-hard.
\end{thm}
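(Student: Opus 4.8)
The plan is to reduce, in polynomial time and preserving the parameter, from \emph{Bin Packing} with item sizes encoded in unary and parameterized by the number of bins, a problem that is $W[1]$-hard by a theorem of Jansen, Kratsch, Marx and Schlotter. An instance consists of positive integers $a_1,\dots,a_n$, a capacity $C$, and a number $k$ of bins, and the question is whether $\{1,\dots,n\}$ can be partitioned into $k$ parts each of item-sum at most $C$. A standard preprocessing step lets me assume $\sum_i a_i=kC$ (delete zero items; answer ``no'' at once if $\sum_i a_i>kC$; otherwise append $kC-\sum_i a_i$ items of size $1$), so that a feasible packing is precisely a partition $(D_1,\dots,D_k)$ of $\{1,\dots,n\}$ with $\sum_{i\in D_\ell}a_i=C$ for every $\ell$.

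From such an instance I build $G=G_1\cup\dots\cup G_n$, a disjoint union of split graphs, keeping the number of colors equal to $k$. The component $G_i$ is the complete split graph with a clique $Q_i$ on exactly $2k-1$ vertices and an independent set $I_i$ on exactly $a_i+1$ vertices, with all $|Q_i|\cdot|I_i|$ clique--independent-set edges present. This construction is polynomial since the $a_i$ are in unary, $G$ is a disjoint union of split graphs by definition, and the new parameter (number of colors) equals the old one (number of bins); so it remains to show that $G$ has an equitable tree-$k$-coloring iff the packing instance is feasible. The argument rests on one rigidity fact about $G_i$. In any equitable tree-$k$-coloring $c$ of $G$, a color class induces a forest and so meets the clique $Q_i$ in at most two vertices; with $|Q_i|=2k-1$ and only $k$ colors available, exactly one color---call it $\beta_i$---receives a single vertex of $Q_i$ while each of the remaining $k-1$ colors receives exactly two. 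Moreover no vertex of $I_i$ can receive a color $\ell\ne\beta_i$, since together with the two $\ell$-colored vertices of $Q_i$ it would span a triangle inside $c^{-1}(\ell)$; hence all $a_i+1$ vertices of $I_i$ get color $\beta_i$, which is consistent because they form a star with the unique $\beta_i$-colored vertex of $Q_i$. A short count then gives $|c^{-1}(\ell)|=2n+\sum_{i:\,\beta_i=\ell}a_i$, and since these $k$ integers pairwise differ by at most one and sum to $k(2n+C)$ they must all equal $2n+C$, so $(\{i:\beta_i=\ell\})_{\ell}$ is a feasible packing. The converse is direct: from a packing $(D_\ell)_\ell$ I color, in each $G_i$ with $i\in D_\ell$, one clique vertex with $\ell$, the other $2k-2$ clique vertices in pairs with the $k-1$ colors $\ne\ell$, and all of $I_i$ with $\ell$; every color class is then a disjoint union of stars and single edges, hence a forest, and the size formula above shows equitability.

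The one genuinely delicate ingredient is the gadget itself: in a tree-coloring the independent vertices of a split graph are essentially unconstrained on their own, so the clique has to be what forces the structure, and the parity $|Q_i|=2k-1$ is exactly what makes this work. A clique on $2k$ vertices would admit no tree-$k$-coloring at all once an independent vertex adjacent to it is present, whereas a clique on $2k-2$ vertices (or any smaller clique of even order) would allow a solution to leave two colors ``deficient'' and split the independent vertices of $G_i$ between those two classes---which would amount to cutting item $i$ in two and would break the reduction. With $2k-1$ clique vertices the coloring is forced to declare exactly one deficient color in each component, and that color plays the role of ``the bin into which item $i$ is placed.'' The rest---the unary-instance normalization and the star/forest bookkeeping---is routine.
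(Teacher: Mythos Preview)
Your proof is correct and follows essentially the same route as the paper: the same reduction from unary \textsc{Bin Packing} parameterized by the number of bins, the same gadget $G_i=K_{2k-1}\oplus(a_i+1)K_1$, and the same rigidity argument (one ``deficient'' color on the clique absorbs the entire independent part, so the color-class sizes encode the bin loads). The only cosmetic differences are that you spell out the preprocessing to $\sum_i a_i=kC$ explicitly and add a helpful remark on why the clique order must be $2k-1$ rather than $2k-2$ or $2k$.
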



\begin{thm}\label{star-free}
{\rm EQUITABLE TREE COLORING} of $K_{1,r}$-free interval graph with $r\geq4$ parameterized by treewidth, number of colors and maximum degree is $W[1]$-hard.
\end{thm}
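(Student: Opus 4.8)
The plan is to give a polynomial parameterized reduction from \emph{Bin Packing} with item sizes written in unary, parameterized by the number of bins; this problem is well known to be $W[1]$-hard (and is, up to routine transformations, the same source of hardness as in Theorem~\ref{split}). It suffices to produce instances that are $K_{1,4}$-free, since among the classes in the statement this is the most restrictive one. From a bin-packing instance with bins $1,\dots,p$, common capacity $B$, and items of sizes $a_1,\dots,a_n$ (so $\sum_i a_i=pB$, all values polynomially bounded), I would build a graph $G$ and a number of colors $k=k(p)$ so that $\Delta(G)$, $\mathrm{tw}(G)$ and $k$ are all bounded by functions of $p$ alone, $G$ is an interval graph with no induced $K_{1,4}$, and $G$ admits an equitable tree-$k$-coloring if and only if the items can be packed. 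Because every interval graph satisfies $\mathrm{tw}(G)=\omega(G)-1\le\Delta(G)$, bounding the maximum degree already bounds the treewidth, so the real task is to keep $\Delta(G)$ and $k$ small while faithfully encoding the packing.

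The core object is an \emph{item gadget} $A_i$ on $\Theta(a_i)$ vertices, assembled from cliques whose size is a fixed function of $p$ (say $2k$) linked together in a proper-interval (path-power) fashion, together with a few ``long'' vertices that create controlled three-way branchings. These branchings are exactly where the class ``$K_{1,4}$-free but not $K_{1,3}$-free'' is used, and they explain why the reduction cannot live inside proper interval graphs, for which the abstract promises a linear-time algorithm. A clique of size $2k$ sitting at the core of $A_i$ forces, in any tree-$k$-coloring, a pairing of the $k$ colors; a propagation argument along the chain makes this pairing coherent throughout $A_i$, so that the $a_i$ ``load vertices'' attached to it are confined to the colors of one block, which we read off as the bin chosen for item $i$. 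A small \emph{selector} gadget synchronizes the pairings of all item gadgets, and a family of \emph{padding gadgets} (again short clique chains) is added so that in the intended colorings every color class has precisely $|V(G)|/k$ vertices. Each gadget comes with an explicit interval representation, and one verifies directly that no vertex sees four pairwise non-adjacent neighbors, i.e.\ $G$ is $K_{1,4}$-free.

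For correctness, the $(\Leftarrow)$ direction is routine: from a packing, color each $A_i$ by the canonical tree-coloring that sends its load vertices to the color block of the bin of item $i$, color the selector accordingly, and use the padding gadgets to even out the color classes; the capacity constraint $\sum_{i\in\text{bin }j}a_i=B$ forces equal class sizes, and no monochromatic cycle appears because every gadget was designed to be a forest once its small cliques are tree-colored. The $(\Rightarrow)$ direction is where the construction earns its keep: starting from an arbitrary equitable tree-$k$-coloring of $G$, the rigidity of the $2k$-cliques together with the propagation and synchronization arguments forces the coloring to coincide, up to harmless local modifications, with one of the canonical colorings; hence each item is assigned to a single bin, and then equitability of the class sizes, after subtracting the fixed contribution of the padding, is exactly the statement that $\sum_{i\in\text{bin }j}a_i=B$ for every $j$.

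I expect the main obstacle to be precisely this rigidity under three simultaneous restrictions. Indivisibility of an item normally wants either a large clique or a long, rigid substructure, but here cliques are capped at size $O(p)$, and a long chain of small cliques inside a $K_{1,4}$-free interval graph is, by itself, far from rigid for tree-coloring --- one can usually recolor any short window. Designing the item gadget (and its interface with the selector) so that the propagation argument genuinely pins the coloring down, while simultaneously keeping the graph interval-representable, keeping $\Delta(G)$ and $k$ bounded by functions of $p$, and maintaining the $K_{1,4}$-free condition at every junction, is the delicate part; making the padding fit exactly is comparatively minor bookkeeping.
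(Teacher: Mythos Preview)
Your framework is right --- reduce from unary \textsc{Bin-Packing} parameterized by the number $k$ of bins, build a $K_{1,4}$-free interval graph whose treewidth and maximum degree depend only on $k$ --- but the construction you sketch is substantially more complicated than what is needed, and the part you yourself flag as ``the delicate part'' (rigidity of the item gadget under tree-$k$-coloring) is left entirely open. The paper's proof sidesteps that difficulty with a single clean idea that you are missing.

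The paper uses cliques of size $2k-1$, not $2k$. In a tree-$k$-coloring of a $(2k-1)$-clique there is exactly one colour that appears once; the other $k-1$ colours each appear twice. A hub vertex $y$ joined to the whole clique is then \emph{forced} to take that singleton colour, because every other colour already appears on two of its neighbours. For item $a_j$ the gadget is simply $2a_j$ disjoint $(2k-1)$-cliques $Q_1,Q'_1,\dots,Q_{a_j},Q'_{a_j}$ together with hub vertices $y_1,\dots,y_{a_j}$, where $y_i$ is complete to $Q_i$, $Q'_i$ and $Q_{i+1}$. The singleton colour of $Q_1$ is forced onto $y_1$, which forces it to be the singleton of $Q'_1$ and $Q_2$, hence onto $y_2$, and so on; one colour $i$ is thus pinned to all of $Y_j$ and to one vertex of every clique in the gadget, with no freedom at all. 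No selector, no padding, no ``long'' vertices: the graph is just the disjoint union of the $n$ item gadgets, each hub sees three pairwise disjoint cliques (so $K_{1,4}$-free and interval), $\Delta=3(2k-1)$, $\mathrm{tw}=2k-1$, and the vertex count is exactly $k(4k-1)B$, so equitability directly says $\sum_{Y_j\subseteq\psi_i}a_j=B$.

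Your $2k$-cliques give no distinguished colour to propagate, which is why you are driven to invent selector and synchronisation machinery and then cannot see how to make it rigid. Switching to odd cliques of size $2k-1$ with hub vertices resolves the obstacle you identified and eliminates all the auxiliary gadgets.
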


Here, a \emph{split graph} is a graph in which the vertices can be partitioned into a clique and an independent set, and a \emph{$K_{1,r}$-free graph} is a graph that does not contain the star $K_{1,r}$ as an induced subgraph.

However, the situation is much better if we are working with an \emph{proper interval graph}, that is an interval graph that has an interval representation in which no interval properly contains any other interval. Actually, we have the following theorem, which will be proved in Section \ref{sec:3}.

\begin{thm}\label{proper-interval}
There is a linear-time algorithm to determine whether a proper interval graph admits an equitable tree-$k$-coloring for a given integer $k$.
\end{thm}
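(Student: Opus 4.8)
The plan is to establish a clean structural characterization: a proper interval graph $G$ admits an equitable tree-$k$-coloring if and only if $k\geq\lceil\omega(G)/2\rceil$. Since a linear-time-computable ordering of a proper interval graph (and hence $\omega(G)$) is available, this immediately yields the claimed linear-time algorithm, namely: compute $\omega(G)$ and test the inequality. As a sanity check, for $k\geq\lceil(\Delta(G)+1)/2\rceil$ one has $\omega(G)\leq\Delta(G)+1\leq 2k$, so the characterization is consistent with Theorem~\ref{interval}.

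For the necessity of $2k\geq\omega(G)$, I would argue as follows: in any tree-coloring every color class induces a forest, hence contains no triangle, hence meets every clique in at most two vertices; applying this to a maximum clique gives $2k\geq\omega(G)$, i.e.\ $k\geq\lceil\omega(G)/2\rceil$. Note this direction uses neither equitability nor properness.

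For sufficiency I would first record two facts. (i) A proper interval graph is chordal, so every induced subgraph is chordal; a chordal graph that is bipartite (equivalently, a union of two independent sets) has no cycle, hence is a forest. Consequently, \emph{any} equitable partition of $V(G)$ into $k$ classes, each a union of two independent sets, is already an equitable tree-$k$-coloring. (ii) Using the standard linear-time-computable ordering $v_0<v_1<\cdots<v_{n-1}$ of a proper interval graph — in which $v_iv_j\in E(G)$ with $i<j$ forces $\{v_i,\dots,v_j\}$ to be a clique, so $j-i\leq\omega(G)-1$ — the coloring $c(v_i)=i\bmod t$ is a proper coloring that is also equitable whenever $t\geq\omega(G)$. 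Now assume $k\geq\lceil\omega(G)/2\rceil$, so $2k\geq\omega(G)$: take the equitable $2k$-coloring from (ii), whose classes $C_1,\dots,C_{2k}$ all have size $\lceil n/2k\rceil$ or $\lfloor n/2k\rfloor$, and pair them into $k$ pairs so that every pair-sum lies in $\{\lfloor n/k\rfloor,\lceil n/k\rceil\}$. Writing $n=2km+r$ with $0\leq r<2k$, this is done by pairing each of the $r$ larger classes with a smaller one when $r\leq k$, and (when $r>k$) pairing the $2k-r$ smaller classes with larger ones and the remaining larger classes among themselves; a short case check on the value of $r$ relative to $k$ shows the resulting $k$ class-sizes are exactly the equitable profile, with the degenerate cases $n\leq 2k$ (where some classes are empty) handled the same way. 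By fact (i) each of the $k$ unions induces a forest, so this is an equitable tree-$k$-coloring.

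Combining the two directions gives the characterization, and hence the linear-time test. I do not expect a deep obstacle here; the main point requiring care is the sufficiency construction — showing that the clique lower bound is actually attained under the equitability constraint, which is precisely where properness enters (through equitable $t$-colorability for every $t\geq\omega(G)$, via the umbrella property of the canonical ordering) and where the residue bookkeeping in the pairing step must be carried out case by case, including the small/degenerate instances.
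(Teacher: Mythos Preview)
Your proposal is correct and arrives at exactly the characterization the paper proves (Lemma~\ref{lem5}): a proper interval graph has an equitable tree-$k$-coloring if and only if $\omega(G)\le 2k$. The necessity argument is identical. For sufficiency, however, you take a different route. The paper colors directly with $k$ colors via $c(v_i)=i\bmod k$ and argues that a monochromatic cycle would force an edge $v_iv_j$ with $j-i\ge 2k$, which by the umbrella property (your fact (ii), the paper's Lemma~\ref{clique-induced}) yields a $(2k{+}1)$-clique. You instead take the proper equitable $2k$-coloring $i\bmod 2k$, then merge the $2k$ independent sets into $k$ pairs, invoking ``chordal\,$\cap$\,bipartite $=$ forest'' to certify each pair induces a forest, and handling equitability by the residue case analysis on $r$ in $n=2km+r$.

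Both arguments are sound. The paper's is shorter: it avoids the pairing combinatorics entirely and immediately gives the explicit coloring used in Algorithm~\ref{alg:1}. Your approach is a bit more conceptual---the ``bipartite subgraph of a chordal graph is a forest'' observation is a clean reusable principle, and it makes transparent that properness is only needed to guarantee equitable proper $t$-colorability for all $t\ge\omega(G)$---but you pay for this with the pairing bookkeeping, which, while routine, is an extra moving part. Note also that the paper's direct $k$-coloring $i\bmod k$ is precisely what your construction produces if you pair class $j$ with class $j+k$; so your detour is unnecessary once one observes that with this specific pairing equitability is automatic.
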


 To end this section, we collect some notations that will be used in the next sections.
 For any two graphs $G$ and $H$, their {\it sum} $G\oplus H$ is the graph given by $V(G\oplus H) = V (G) \cup V (H)$ and $E(G \oplus H) = E(G) \cup E(H) \cup \{uv | u \in V (G),v \in V (H)\}$, and their {\it union} $G\cup H$ is the graph given by $V(G\cup H) = V (G) \cup V (H)$ and $E(G \cup H) = E(G) \cup E(H)$. By $nG$, we denote the $n$ disjoint copies of $G$, and $[k]$ stands for $\{1,2,\cdots,k\}$. Other undefined notations follow \cite{Diestelbook}.

\section{W[1]-hardness: the proofs of Theorems \ref{split} and \ref{star-free}}\label{sec:2}

All of our reductions involve the following BIN-PACKING problem, which is NP-hard in the strong sense \cite{GJ1979}, and is $W[1]$-hard when parameterized by the number of bins \cite{Jansen2010260,Jansen201339}.

\begin{tcolorbox}
BIN-PACKING\\
\noindent \textbf{Instance}: A set of $n$ items $A=\{a_1, a_2, ..., a_n\}$ and a bin capacity $B$.\\
\noindent \textbf{Parameter}: The number of bins $k$.\\
\noindent \textbf{Question}: Is there a $k$-partition $\varphi$ of $A$ such that, $\forall$ $i \in [k]$, $\sum_{a_j \in \varphi_i} a_j = B$?
\end{tcolorbox}

\noindent {\bf \emph{Proof of Theorem \ref{split}}}.
Given an instance of BIN-PACKING  as above, Our strategy is to construct a disjoint union of split graph $G$ such that the answer of the BIN-PACKING is YES if and only if $G$ admits an equitable tree-$k$-coloring.
Here, $$G=\bigcup_{j\in[n]}H(a_j,k),$$ where $a_1,a_2,\cdots,a_n$ are arbitrarily given integers in the instance of BIN PACKING, and $$H(a,k)=K_{2k-1}\oplus (a+1)K_1$$ defines a split graph for integers $a$ and $k$. For $j\in [n]$, let $I_j$ be the independent set of size $a_j+1$ in $H(a_j, k)$, and let $c_j$ be a fixed vertex in the clique part $K_{2k-1}$ of $H(a_j, k)$.

Suppose that there is a $k$-partition $\varphi$ of $A=\{a_1, a_2, ..., a_n\}$ such that, $\forall$ $i \in [k]$, $\sum_{a_j \in \varphi_i} a_j = B$. For any $i \in [k]$ and for any $j$ satisfying $a_j \in \varphi_i$, color $c_j$ and all vertices in $I_j$ with color $i$. For any $j\in [n]$ such that $a_j\in \varphi_i$, color the $2k-2$ vertices in $V(H(a_j,k))\setminus (I_j\cup \{c_j\})$ with $k-1$ distinct colors in $[k]\setminus i$ so that each color is used exactly twice. At this moment, for any $j\in [n]$, $H(a_j, k)$ has been colored with $k$ colors so that the set of all vertices with color $i$ (here $i$ is the integer such that $a_j \in \varphi_i$) induces a forest, which is actually a star with center $c_j$, and each of another $k-1$ colors besides $i$ is used for exactly two vertices. Hence this gives a tree-$k$-coloring of $H(a_j, k)$ for any $j\in [n]$, and thus finally gives a tree-$k$-coloring of $G$.
To see that this coloring is an equitable tree-$k$-coloring of $G$, we denote the set of vertices with color $i$ as $V_i$ for any color $i\in [k]$. Clearly, $$|V_i|=\sum_{a_j\in \varphi_i} (a_j+2)+\sum_{a_j\not\in \varphi_i} 2=\sum_{a_j\in \varphi_i} a_j+2n=B+2n $$ for any $i\in[k]$, which implies that such a coloring is equitable.

On the other direction, if $G$ admits an equitable tree-$k$-coloring $\psi$, then in the clique part $K_{2k-1}$ of each $H(a_j, k)$ with $j\in [n]$, there is a color $i$ appearing on exactly one vertex, and each of another $k-1$ colors appears on exactly two vertices. It follows that all vertices in $I_j$ of $H(a_j, k)$ are colored with $i$, since any other color classes contains two vertices in the clique part $K_{2k-1}$, which are adjacent to all vertices in $I_j$. Therefore, taking any one vertex $v_j\in I_j$ together with  the clique part $K_{2k-1}$ induces a clique $K_{2k}$ containing exactly two vertices in each color class. 
Let $\psi_i$ be the vertices of $G$ colored with $i$ under the coloring  $\psi$.
We show that $\sum_{I_j\subseteq\psi_i} a_j=B$, which indicates that the answer for the BIN-PACKING is YES. 

Since there are $$n(2k-1)+\sum_{j\in [n]} (a_j+1)=k(2n+B)$$ vertices in $G$ (note that in BIN-PACKING we always assume that $\sum_{j\in [n]} a_j=kB$), each color class of $\psi$ contains exactly $2n+B$ vertices, which consists of, for each $j\in [n]$, two vertices in the clique $K_{2k}$ of $H(a_j,k)$ as chosen above and $a_j$ vertices in $I_j\backslash \{v_j\}$ if $I_j\subseteq \psi_i$. 
So $$2n+B=|\psi_i|=2n+\sum_{I_j\subseteq \psi_i} a_j,$$ which gives $\sum_{I_j\subseteq\psi_i} a_j=B$.

\begin{lem}\label{inteval}\cite{Fishburn1985,McMorris}
  A graph is an interval graph if and only if its maximal cliques can be ordered as $M_1,M_2,\cdots,M_k$ such that for any $v\in M_i\cap M_k$ with $i < k$, $v\in M_j$ for any every $i\leq j\leq k$.
\end{lem}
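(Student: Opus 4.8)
The plan is to prove both directions of the equivalence directly from the definitions: the forward direction via the Helly property of intervals on a line, and the backward direction by building an explicit interval representation whose endpoints are clique indices.

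For the forward direction, suppose $G$ has an interval representation $\{T_v : v\in V(G)\}$. First I would observe that for any maximal clique $M$ the intervals $\{T_v : v\in M\}$ pairwise intersect, so by the Helly property of intervals on a line there is a point $c_M\in\bigcap_{v\in M}T_v$. The key observation is that $M=\{v\in V(G): c_M\in T_v\}$: the inclusion $\subseteq$ is immediate, and if $c_M\in T_u$ then $T_u$ meets $T_v$ for every $v\in M$, so $M\cup\{u\}$ is a clique and maximality forces $u\in M$. In particular, if $c_M=c_{M'}$ then $M=M'$, so the maximal cliques can be listed as $M_1,\dots,M_k$ with $c_{M_1}<\dots<c_{M_k}$. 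If $v\in M_i\cap M_l$ with $i<l$, then $c_{M_i},c_{M_l}\in T_v$, and since $T_v$ is an interval it contains every $c_{M_j}$ with $i\le j\le l$; by the key observation this yields $v\in M_j$, which is exactly the stated consecutiveness.

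For the backward direction, given an ordering $M_1,\dots,M_k$ of the maximal cliques with the stated property, I would assign to each vertex $v$ the closed interval $T_v=[\ell(v),r(v)]$, where $\ell(v)=\min\{i:v\in M_i\}$ and $r(v)=\max\{i:v\in M_i\}$; this is a well-defined nonempty interval because every vertex lies in at least one maximal clique, and the hypothesis guarantees $v\in M_j$ precisely when $\ell(v)\le j\le r(v)$. To verify this is an interval representation: if $uv\in E(G)$ then $u$ and $v$ lie together in some maximal clique $M_j$, so $j\in T_u\cap T_v$; conversely, if some integer $j\in T_u\cap T_v$ then $u,v\in M_j$, and since $M_j$ is a clique, $uv\in E(G)$. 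Hence $G$ is an interval graph.

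There is no genuinely hard step here; the only points requiring care are invoking the Helly property correctly and the maximality argument pinning down $M=\{v:c_M\in T_v\}$ in the forward direction, and, in the backward direction, the standard but essential fact that every edge of $G$ lies in some maximal clique, so that adjacency is detected by a common clique index. This is the classical characterization of interval graphs (Gilmore--Hoffman, Fulkerson--Gross), and the argument above is the textbook one recorded in \cite{Fishburn1985,McMorris}.
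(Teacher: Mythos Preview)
Your argument is correct and is exactly the classical Fulkerson--Gross/Gilmore--Hoffman proof: the forward direction via Helly's property for intervals and the identification $M=\{v:c_M\in T_v\}$, and the backward direction by assigning to each vertex the interval of its clique indices. One tiny point worth making explicit in the backward direction is that $T_u\cap T_v\neq\emptyset$ automatically contains an integer (namely $\max(\ell(u),\ell(v))$) because the endpoints are integers, so the ``some integer $j$'' really is available; you clearly have this in mind but it deserves a word.

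As for comparison with the paper: there is nothing to compare. The paper does not prove this lemma at all; it is stated as a quotation from the literature (the citations \cite{Fishburn1985,McMorris} are placed in the lemma header itself) and used as a black box in the proof of Theorem~\ref{star-free}. Your write-up therefore supplies more than the paper does, and it does so correctly.
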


\noindent {\bf \emph{Proof of Theorem \ref{star-free}}}.
We prove the theorem for parameter number of colors; and the theorem for another two parameters can be proved in the same way. Given an instance of the BIN-PACKING, our strategy is to construct a $K_{1,r}$-free interval graph $G$ with $r\geq4$ such that the answer of the BIN-PACKING is YES if and only if $G$ admits an equitable tree-$k$-coloring.
Here, $$G=\bigcup_{j\in[n]}J(a_j,k)$$ with $a_1,a_2,\cdots,a_n$ being arbitrarily given integers in the instance of BIN PACKING, and 
$$J(a,k)=\bigg(\bigcup_{i\in[a]}\big(Q_i\oplus y_i\big)\bigg) \bigcup \bigg(\bigcup_{i\in[a]}\big(Q'_i\oplus y_i\big)\bigg) \bigcup \bigg(\bigcup_{i\in[a-1]}\big(Q_{i+1}\oplus y_i\big)\bigg),$$ 
where
$S=\{Q_1,Q'_1,\cdots,Q_a,Q'_a\}$ is a set of cliques such that $Q_i\simeq Q'_i\simeq K_{2k-1}$ and $Y=\{y_1,\cdots,y_a\}$ is a set of vertices. Note that the vertices of $G$ with largest degree are the ones contained in $Y\setminus \{y_a\}$, which have degrees equal to $3(2k -1)$, and the treewidth of $G$ is $2k-1$.

We claim first that $G$ is an interval graph. Indeed, it is sufficient to show that $J(a,k)$ is an interval graph for any positive  integer $a$. By the definition of $J(a,k)$, one can see that it has $3a-1$ maximal cliques $M_1,M_2,\cdots,M_{3a-1}$ such that
\begin{align*}
        M_i
        &=
        \begin{cases}
        Q_i\oplus y_i
        &\text{if }i\equiv 1 ~(\text{mod }3)
        \\
        Q'_i\oplus y_i
        &\text{if }i\equiv 2 ~(\text{mod }3)
        \\
        Q_{i+1}\oplus y_i
        &\text{if }i\equiv 0 ~(\text{mod }3).
        \end{cases}
   \end{align*}
Since $M_i\cap M_{j}=\emptyset$ for any $i\equiv 1 ~(\text{mod }3)$ and $j\geq i+3$,
$M_i\cap M_{j}=\emptyset$ for any $i\not\equiv 1 ~(\text{mod }3)$ and $j\geq i+2$,
and $M_i\cap M_{i+1}=M_i\cap M_{i+2}=M_{i+1}\cap M_{i+2}=\{y_i\}$ for any $1\leq i\leq 3a-5$ with $i\equiv 1 ~(\text{mod }3)$, the ordering $M_1,M_2,\cdots,M_{3a-1}$ satisfies the property described by Lemma
\ref{inteval}, and therefore, $J(a,k)$ is an interval graph.

Suppose that there is a $k$-partition $\varphi$ of $A$ such that, $\forall$ $i \in [k]$, $\sum_{a_j \in \varphi_i} a_j = B$. 
For any $i \in [k]$ and for any $j$ satisfying $a_j \in \varphi_i$, color all vertices of $Y_j$, corresponding to the vertex set $Y$ in $J(a_j,k)$, with color $i$.  
For any $j\in [n]$ such that $a_j\in \varphi_i$, color each $(k-1)$-clique of $S_j$, corresponding to the set $S$ of cliques in $J(a_j,k)$, so that the color $i$ is used for exactly one vertex, and each of the remaining $k-1$ colors in $[k]\backslash i$ are used for exactly two vertices.
Clearly, this gives a tree-$k$-coloring of $J(a_j, k)$, where $3a_j$ vertices consisting of $Y_j$ and one vertex in each clique in $S_j$, are colored with color $i$, and each of another $k-1$ colors is used for exactly two vertices in each clique of $S_j$. Hence a tree-$k$-coloring of $G$ is given now. To see that this is an equitable tree-$k$-coloring of $G$, we denote the set of vertices with color $i$ as $V_i$ for any color $i\in [k]$. The fact that  $$|V_i|=\sum_{a_j\in \varphi_i} 3a_j+\sum_{a_j\notin \varphi_i} 4a_j=3B+ 4(kB-B)=(4k-1)B$$ for any $i\in [k]$ implies the equability of this coloring.

On the other direction, if $G$ admits an equitable tree-$k$-coloring $\psi$, then in each clique of $S_j$ for $j\in [n]$,
there is a color appearing on exactly one vertex, and each of another $k-1$ colors appears on exactly two vertices. Suppose that the color $i$ appears on exactly one vertex of the first clique $Q_1\in S_j$ for some $j\in [n]$. It follows that $y_1$ should be colored with $i$ because $y_1$ is adjacent to every vertices of $Q_1$ and each color in $[k]\backslash i$ already appears on two vertices of $Q_1$. Consequently, the color being used exactly once for the vertices of the second clique $Q'_1\in S_j$ and the third clique $Q_2\in S_j$ is indeed $i$, which implies that $y_2$ shall be colored with $i$. Following this process, we can conclude that each vertex of $Y_j$ is colored with $i$, and in each clique of $S_j$,
the color $i$ appears on exactly one vertex, and each of another $k-1$ colors in $[k]\backslash i$ appears on exactly two vertices.  Let $\psi_i$ be the vertices of $G$ colored with $i$ under the coloring  $\psi$.
We show that $\sum_{Y_j\subseteq\psi_i} a_j=B$, which indicates that the answer for the BIN-PACKING is YES. 

Since there are $$\sum_{j\in [n]} \big(a_j+2a_j(2k-1)\big)=kB+(4k-2)kB=k(4k-1)B$$ vertices in $G$, for each $i\in [k]$, $$|\psi_i|=(4k-1)B,$$ and by the way of the coloring $\psi$ as described above, we also see that
$$|\psi_i|=\sum_{Y_j\subseteq \psi_i} 3a_j+\sum_{Y_j\cap \psi_i=\emptyset} 4a_j = \sum_{j\in [n]} 4a_j-\sum_{Y_j\subseteq  \psi_i} a_j=4kB-\sum_{Y_j\subseteq  \psi_i}a_j.$$
Combining the two expression gives $\sum_{Y_j\subseteq\psi_i} a_j=B$.

\section{Linear-time algorithm: the proof of Theorem \ref{proper-interval}}\label{sec:3}

\begin{lem}\label{clique-induced}
Let $G$ be a proper interval graph with $V(G)=\{v_1,v_2,\cdots,v_n\}$, where $v_1<v_2<\cdots<v_{n}$. If $v_iv_j\in E(G)$, 
then  $\{v_i,v_{i+1},\cdots,v_{j-1},v_j\}$ induces a clique of size $j-i+1.$
\end{lem}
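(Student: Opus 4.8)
The plan is to prove this directly from property~\eqref{eq1}, which characterizes the linear order on interval graphs. Fix $v_i < v_j$ with $v_iv_j \in E(G)$, and take any two indices $p, q$ with $i \le p < q \le j$. I want to show $v_pv_q \in E(G)$. Since $v_p < v_q < v_j$ and $v_pv_j \in E(G)$ would give the conclusion via~\eqref{eq1}, the first step is to establish $v_pv_j \in E(G)$ for every $p$ with $i \le p \le j$; once that is known, applying~\eqref{eq1} to the triple $v_p < v_q < v_j$ yields $v_pv_q \in E(G)$, and since $p < q$ were arbitrary in $\{i,\dots,j\}$, the set $\{v_i,\dots,v_j\}$ induces a complete graph, which has exactly $j-i+1$ vertices.

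So the crux is: for $i \le p \le j$, show $v_iv_p \in E(G)$ (or directly $v_pv_j\in E(G)$). For $p=i$ or $p=j$ this is trivial; for $i < p < j$ we have $v_i < v_p < v_j$ with $v_iv_j \in E(G)$, so~\eqref{eq1} immediately gives $v_iv_p \in E(G)$. Then for the pair $v_p, v_q$ with $i \le p < q \le j$: we now know $v_iv_q\in E(G)$ (just shown, with $q$ in place of $p$), and $v_i \le v_p < v_q$; if $v_i < v_p$, property~\eqref{eq1} applied to $v_i < v_p < v_q$ with $v_iv_q\in E(G)$ gives $v_pv_q\in E(G)$, while if $v_i = v_p$ (i.e.\ $p=i$) we already have $v_pv_q = v_iv_q \in E(G)$. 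Hence every pair in $\{v_i,\dots,v_j\}$ is adjacent.

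It is worth noting that this argument uses only~\eqref{eq1}, i.e.\ only that $G$ is an interval graph, so one might wonder where ``proper'' is needed. The point is that properness guarantees the consecutiveness of the clique as a \emph{block} of the order $v_1 < \cdots < v_n$ is exactly what we get — in a general interval graph the same conclusion actually still follows from~\eqref{eq1} alone, so the ``proper'' hypothesis is not strictly essential for this particular statement but is retained because the surrounding algorithm in Section~\ref{sec:3} works with a proper interval representation. The main (and only mild) obstacle is bookkeeping the boundary cases $p = i$ and the degenerate situation $L(v_i) = L(v_p)$, which~\eqref{eq1} handles by its hypothesis ``$u < v < w$''; I will dispatch these separately as above so that every invocation of~\eqref{eq1} has a strict chain $u < v < w$. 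No calculation beyond counting $|\{v_i,\dots,v_j\}| = j-i+1$ is required.
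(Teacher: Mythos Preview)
Your argument contains a genuine error in the application of property~\eqref{eq1}. That property says: if $u<v<w$ and $uw\in E(G)$, then $uv\in E(G)$. It gives you the \emph{left} pair, not the right one. In your second step you apply~\eqref{eq1} to $v_i<v_p<v_q$ with $v_iv_q\in E(G)$ and claim it yields $v_pv_q\in E(G)$; in fact it only yields $v_iv_p\in E(G)$, which you already had. So the argument never establishes $v_pv_q\in E(G)$ for $i<p<q<j$, and the proof collapses.

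More seriously, your remark that the ``proper'' hypothesis is not strictly essential is false. Take the path $P_3$ on vertices $u,v,w$ with $uv,uw\in E$ and $vw\notin E$, realized by intervals $T_u=[0,3]$, $T_v=[1,1]$, $T_w=[2,2]$. This is an interval graph with order $u<v<w$ satisfying~\eqref{eq1}, and $v_1v_3=uw\in E(G)$, yet $\{v_1,v_2,v_3\}$ is not a clique. So the lemma fails for general interval graphs, and no proof from~\eqref{eq1} alone can work. The paper's proof uses the proper interval representation directly: from $L(v_i)\le L(v_s)$ it deduces $R(v_i)\le R(v_s)$ (this is exactly where non-containment enters), and then the chain $L(v_\ell)\le L(v_j)\le R(v_i)\le R(v_s)$ forces $T_{v_s}\cap T_{v_\ell}\neq\emptyset$. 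You need an ingredient of this kind; property~\eqref{eq1} by itself is too weak.
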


\begin{proof}
By the definition of the proper interval graph, for any $i\leq s<\ell\leq j$, $L(v_s)< L(v_{\ell})$, and if $v_iv_j\in E(G)$, then $L(v_i)\leq L(v_s)<L(v_{\ell})\leq L(v_j)\leq R(v_i)\leq R(v_s)<R(v_{\ell})\leq R(v_j)$. This implies that the interval $[L(v_s),R(v_s)]$ intersects the interval $[L(v_{\ell}),R(v_\ell)]$ and thus $v_kv_{\ell}\in E(G)$.
Hence any two vertices among $\{v_i,v_{i+1},\cdots,v_{j-1},v_j\}$ are adjacent and such a  vertex set induces a clique.
\end{proof}

\begin{lem}\label{lem5}
  A proper interval graph has an equitable tree-$k$-coloring if and only if its maximum clique has size at most $2k$.
\end{lem}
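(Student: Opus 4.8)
The plan is to prove the two directions separately; the necessity is immediate and the sufficiency is a short variant of the proof of Theorem~\ref{interval}.

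\emph{Necessity.} I would note first that the bound is forced by the tree-coloring condition alone, before equitability enters: if $G$ had a clique $K$ with $|K|\geq 2k+1$, then in any tree-$k$-coloring each of the $k$ color classes would meet $K$ in an induced forest and hence in at most two vertices (three pairwise adjacent vertices span a triangle), so $|K|\leq 2k$, a contradiction. Thus $\omega(G)\leq 2k$ is necessary for an equitable tree-$k$-coloring — indeed for any tree-$k$-coloring.

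\emph{Sufficiency.} Assume $\omega(G)\leq 2k$, and sort $V(G)$ as $v_0<v_1<\cdots<v_{n-1}$ using the order of~\eqref{eq1} (the one underlying Lemma~\ref{clique-induced}). I would color exactly as in the proof of Theorem~\ref{interval}, setting $c(v_i)=i\ (\mathrm{mod}\ k)$. This coloring is equitable, since every color class has $\lceil n/k\rceil$ or $\lfloor n/k\rfloor$ vertices, so it remains only to check that no color class contains a cycle. Suppose color class $i$ contained a cycle $C$, and let $v_{i+\alpha k}$ be the vertex of $C$ of smallest index. Its two neighbours on $C$ are again in color class $i$ and necessarily have larger index, hence they are $v_{i+\beta k}$ and $v_{i+\gamma k}$ for some distinct $\beta,\gamma>\alpha$; taking $\beta<\gamma$ we get $\gamma\geq\alpha+2$, $v_{i+\alpha k}v_{i+\gamma k}\in E(G)$, and index gap $(\gamma-\alpha)k\geq 2k$. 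By Lemma~\ref{clique-induced}, $\{v_{i+\alpha k},v_{i+\alpha k+1},\dots,v_{i+\gamma k}\}$ induces a clique of size at least $2k+1$, contradicting $\omega(G)\leq 2k$. Hence every color class is acyclic, i.e.\ induces a forest, and $c$ is an equitable tree-$k$-coloring.

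There is no genuinely hard step here. The one subtlety worth flagging in the sufficiency direction is the passage to a three-vertex configuration: one must start from the \emph{lowest-indexed} vertex of the hypothetical monochromatic cycle so that both of its cycle-neighbours lie strictly to its right, which is precisely what upgrades the index gap from $k$ to $2k$ and lets Lemma~\ref{clique-induced} produce a clique exceeding the permitted size.
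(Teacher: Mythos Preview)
Your proof is correct and follows essentially the same route as the paper: the same pigeonhole argument for necessity, and for sufficiency the same modular coloring along the linear order combined with Lemma~\ref{clique-induced} to rule out monochromatic cycles. Your version is in fact slightly more careful than the paper's in justifying why a monochromatic cycle forces an edge with index gap at least $2k$ (via the lowest-indexed vertex of the cycle), whereas the paper simply asserts this.
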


\begin{proof}
If $c$ is an equitable tree-$k$-coloring of $G$, then there is no clique on at least $2k+1$ vertices, because otherwise there is a color appearing at least three times on this clique, implying the existence of a monochromatic triangle, a contradiction. Hence the maximum clique of $G$ has size at most $2k$.

On the other direction, if the maximum clique of $G$ has size at most $2k$, then sort the vertices of $G$ into $v_0,v_1,\cdots,v_{n-1}$ so that $v_i<v_j$ if $i<j$. Let $c(v_i)=i~$(mod $k$). It is clear that $c$ is an equitable $k$-coloring of $G$. If there is a monochromatic cycle under $c$, then there are two adjacent vertices $v_i$ and $v_j$ with $j-i=\beta k$ with $\beta\geq 2$. By Lemma \ref{clique-induced}, $G$ contains a clique of size $j-i+1=\beta k+1\geq 2k+1$ as a subgraph, a contradiction. This implies that there is no monochromatic cycle under $c$, and thus $c$ is an equitable tree-$k$-coloring of $G$.
\end{proof}

\begin{algorithm}\label{alg:1}
\BlankLine
\KwIn{A proper interval graph $G=(V,E)$ on $n$ vertices; a set of $k$ colors $\{0,1,\cdots,k-1\}$;}
\KwOut{$Answ$;}
\BlankLine
$Answ$ $\leftarrow$ $YES$;\\
Sort the vertices of $G$ into $v_0<v_1<\cdots<v_{n-1}$, where $<$ is a linear order on $V(G)$;\\
\For{$i=0$ to $n-1$}{
  Color vertex $v_i$ with the color $c(i)=i$ (mod $k$);
  }
\If{there is a monochromatic cycle in any color class}{
  Output $NO$;}
Output $Answ$.
\caption{Linear-time Algorithm for proper interval graphs}
\label{alg: properinterval}
\end{algorithm}

\begin{thm}\label{thm4}
Given a proper interval graph $G$ and an integer $k>0$, Algorithm ~\ref{alg: properinterval} outputs YES in linear time if and only if there exists an equitable tree-$k$-coloring of $G$; moreover, if YES, it gives an equitable tree-$k$-coloring of $G$.
\end{thm}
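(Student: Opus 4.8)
The plan is to reduce correctness of Algorithm~\ref{alg: properinterval} to Lemmas~\ref{clique-induced} and~\ref{lem5}, and then to obtain the running time by replacing the abstract test ``is there a monochromatic cycle'' in line~5 by a single local scan along the linear order. Note first that the colouring $c(v_i)=i \bmod k$ produced in lines~3--4 is automatically an equitable $k$-colouring: its colour classes are $\{v_i : i\equiv r \pmod k\}$ for $r\in\{0,\dots,k-1\}$, whose sizes pairwise differ by at most one. Hence the only issue is whether each colour class induces a forest, which is exactly what line~5 is checking.

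Next I would prove the ``if and only if''. If the algorithm outputs \textsf{YES}, then no colour class contains a cycle, so $c$ is an equitable tree-$k$-colouring of $G$; this also gives the ``moreover'' part. If the algorithm outputs \textsf{NO}, then some colour class, say colour $r$, contains a cycle. Let $v_a$ be the vertex of smallest index on this cycle; its two cycle-neighbours $v_b,v_c$ both have larger index and colour $r$, and if $b<c$ then (distinct indices congruent to $r$ mod $k$) $b\ge a+k$ and $c\ge b+k\ge a+2k$. Since $v_av_c\in E(G)$, Lemma~\ref{clique-induced} yields a clique of size $c-a+1\ge 2k+1$, so by Lemma~\ref{lem5} $G$ has no equitable tree-$k$-colouring. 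Conversely, if $G$ has no equitable tree-$k$-colouring, Lemma~\ref{lem5} gives a clique of size at least $2k+1$; applying Lemma~\ref{clique-induced} to its least and greatest (hence adjacent) vertices $v_i<v_j$ shows that $v_i,v_{i+1},\dots,v_j$ with $j-i\ge 2k$ form a clique, and $v_i,v_{i+k},v_{i+2k}$ is then a monochromatic triangle, so the algorithm detects a monochromatic cycle and outputs \textsf{NO}.

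For the complexity, line~2 takes linear time by Olariu's characterisation~\cite{Olariu199121} (together with the stored interval endpoints), and lines~3--4 are clearly linear. The delicate point is line~5, since a proper interval graph may have $\Theta(n^2)$ edges and so one cannot afford to inspect each colour class edge by edge. I would instead use the equivalences just established: a monochromatic cycle exists if and only if a monochromatic triangle exists, which by Lemma~\ref{clique-induced} holds if and only if $v_iv_{i+2k}\in E(G)$ for some $0\le i\le n-1-2k$. Because $v_i<v_{i+2k}$, adjacency of this pair is decided by the single comparison $L(v_{i+2k})\le R(v_i)$ on the stored endpoints, so line~5 reduces to at most $n$ constant-time tests and runs in linear time; hence the whole algorithm is linear.

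The main obstacle is this last reduction: certifying that a purely local check along the linear order is guaranteed to catch every monochromatic cycle, however long. This is precisely where the structure of (proper) interval graphs enters through Lemma~\ref{clique-induced} — a long monochromatic edge forces a large clique, and conversely any monochromatic cycle forces a long monochromatic edge — so that detecting monochromatic cycles, deciding whether the maximum clique exceeds $2k$, and scanning the pairs $(v_i,v_{i+2k})$ all become the same question.
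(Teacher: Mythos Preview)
Your correctness argument follows the same route as the paper's: both reduce the question to Lemmas~\ref{clique-induced} and~\ref{lem5}, showing that a monochromatic cycle under $c$ forces an edge $v_iv_j$ with $j-i\ge 2k$, hence a clique of size exceeding $2k$, and conversely. Your write-up is more explicit (you spell out both directions of the biconditional and the equitability of $c$), but the underlying ideas coincide.

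Where you go beyond the paper is in the analysis of line~5. The paper simply asserts that the running time is dominated by the ordering step and is $O(|V|+|E|)$ by~\cite{Olariu199121}, without saying how the monochromatic-cycle test is implemented; implicitly one runs a DFS on each colour class, which is $O(|V|+|E|)$ in total. Your reduction to the single scan ``does $v_iv_{i+2k}\in E(G)$ for some $i$?'' is a genuine refinement: it makes the chain of equivalences (monochromatic cycle $\Leftrightarrow$ clique of size $>2k$ $\Leftrightarrow$ some edge of span $\ge 2k$) explicit and shows the test needs only $O(n)$ endpoint comparisons once an interval representation is available. One small caveat: since the algorithm's stated input is $G=(V,E)$, merely reading it already costs $\Theta(|V|+|E|)$, so under that convention your concern that ``one cannot afford to inspect each colour class edge by edge'' does not actually bite; your local scan buys a real improvement only if the interval representation (rather than the edge list) is taken as the input.
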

\begin{proof}

From Algorithm \ref{alg:1}, one sees that the given coloring is an equitable tree-$k$-coloring of $G$ if it outputs YES. If there is an equitable tree-$k$-coloring of $G$, then the size of the maximum clique of $G$ is at most $2k$ by Lemma \ref{lem5}. In any iterative step of Algorithm \ref{alg:1}, every color class induces disjoint unions of paths, because if not, there are two adjacent vertices $v_sv_\ell$ with $\ell-s\ge 2k$, which gives a clique of size $2k+1$ by Lemma \ref{clique-induced}, a contradiction. So the algorithm outputs YES. The time complexity dominates by Line 3, which takes $O(|V|+|E|)$ time, see \cite[Theorem 6]{Olariu199121}.  
\end{proof}

\noindent
{\bf \emph{Proof of Theorem \ref{proper-interval}}}.
This is an immediate corollary from Theorem \ref{thm4}.
 \vspace{2mm}

\noindent {\bf Remark.} Lemma \ref{lem5} implies that {\rm EQUITABLE TREE COLORING} of proper interval graphs is equivalent to determine whether $2k$ is the upper bound of its clique number. We know that to calculate all maximal cliques of a triangulated graph  $G=(V,E)$ (i.e, a graph without induced cycles on at least four vertices)  can be done in $O(|V|+|E|)$ time \cite[Theorem 4.17]{GOLUMBIC198081}, and any proper interval graph is a triangulated graph by Lemma \ref{clique-induced}. This also proves Theorem \ref{proper-interval}, however, without giving an equitable tree-$k$-coloring of $G$ if the algorithm outputs YES.

\section*{Acknowledgements}

The last author would like to acknowledge the supports provided by China Scholarship Council (CSC) under the grant number 201906965003 and by Institute for Basic Science (IBS, South Korea) during a visit of him to Discrete Mathematics Group, IBS.
%
%
%

\begin{thebibliography}{8}

\bibitem{Chen2017426}
G.~Chen, Y.~Gao, S.~Shan, G.~Wang, and J.~Wu.
\newblock Equitable vertex arboricity of 5-degenerate graphs.
\newblock {\em Journal of Combinatorial Optimization}, 34(2):426--432, 2017.

\bibitem{Diestelbook}
R.~Diestel.
\newblock {\em Graph Theory (5th edition)}.
\newblock Springer, Auguest 2016.

\bibitem{Drgas-Burchardt20186353}
E.~Drgas-Burchardt, J.~Dybizba\'nski, H.~Furma\'nczyk, and E.~Sidorowicz.
\newblock Equitable list vertex colourability and arboricity of grids.
\newblock {\em Filomat}, 32(18):6353--6374, 2018.

\bibitem{Fishburn1985}
P.~Fishburn.
\newblock {\em Interval orders and interval graphs: A study of partially
  ordered sets, Wiley-Interscience Series in Discrete Mathematics}.
\newblock John Wiley \& Sons, New York, 1985.

\bibitem{GJ1979}
M.~R. Garey and D.~S. Johnson.
\newblock {\em Computers and Intractability: A Guide to the Theory of
  {NP}-Completeness}.
\newblock W.H. Freeman \& Co., USA, 1990.

\bibitem{GOLUMBIC198081}
M.~C. Golumbic.
\newblock Chapter 4 - triangulated graphs.
\newblock In M.~C. Golumbic, editor, {\em Algorithmic Graph Theory and Perfect
  Graphs}, pages 81 -- 104. Academic Press, 1980.

\bibitem{Gomes2019}
G.~Gomes, C.~Lima, and V.~Dos~Santos.
\newblock Parameterized complexity of equitable coloring.
\newblock {\em Discrete Mathematics and Theoretical Computer Science}, 21(1),
  2019.

\bibitem{Jansen2010260}
K.~Jansen, S.~Kratsch, D.~Marx, and I.~Schlotter.
\newblock Bin packing with fixed number of bins revisited.
\newblock {\em Lecture Notes in Computer Science (including subseries Lecture
  Notes in Artificial Intelligence and Lecture Notes in Bioinformatics)}, 6139
  LNCS:260--272, 2010.

\bibitem{Jansen201339}
K.~Jansen, S.~Kratsch, D.~Marx, and I.~Schlotter.
\newblock Bin packing with fixed number of bins revisited.
\newblock {\em Journal of Computer and System Sciences}, 79(1):39--49, 2013.

\bibitem{Li2020156}
B.~Li and X.~Zhang.
\newblock Tree-coloring problems of bounded treewidth graphs.
\newblock {\em Journal of Combinatorial Optimization}, 39(1):156--169, 2020.

\bibitem{McMorris}
F.~McMorris.
\newblock Interval orders and interval graphs --- a study of partially ordered
  sets ({Peter C. Fishburn}).
\newblock {\em SIAM Review}, 29(3):484--486, 1987.

\bibitem{Niu2020}
B.~Niu, X.~Zhang, and Y.~Gao.
\newblock Equitable partition of plane graphs with independent crossings into
  induced forests.
\newblock {\em Discrete Mathematics}, 343(5), 2020.

\bibitem{Olariu199121}
S.~Olariu.
\newblock An optimal greedy heuristic to color interval graphs.
\newblock {\em Information Processing Letters}, 37(1):21--25, 1991.

\bibitem{Wu20132696}
J.-L. Wu, X.~Zhang, and H.~Li.
\newblock Equitable vertex arboricity of graphs.
\newblock {\em Discrete Mathematics}, 313(23):2696--2701, 2013.

\bibitem{Zhang20161724}
X.~Zhang.
\newblock Equitable vertex arboricity of subcubic graphs.
\newblock {\em Discrete Mathematics}, 339(6):1724--1726, 2016.

\bibitem{Zhang2020581}
X.~Zhang and B.~Niu.
\newblock Equitable partition of graphs into induced linear forests.
\newblock {\em Journal of Combinatorial Optimization}, 39(2):581--588, 2020.

\bibitem{zhang2019equitable}
X.~Zhang, B.~Niu, Y.~Li, and B.~Li.
\newblock Equitable vertex arboricity of $d$-degenerate graphs.
\newblock arXiv:1908.05066v2 [math.CO], 2019.

\bibitem{Zhang2014217}
X.~Zhang and J.-L. Wu.
\newblock A conjecture on equitable vertex arboricity of graphs.
\newblock {\em Filomat}, 28(1):217--219, 2014.

\end{thebibliography}
%

\end{document}